\newtheorem{theorem}{Theorem}[section] \newtheorem{definition}[theorem]{Definition}
 \newtheorem{remark}[theorem]{Remark} \newtheorem{lemma}[theorem]{Lemma}
\newcommand{\qqquad}[0]{\qquad\qquad}
\newcommand{\NT}[0]{\notag}
\newcommand{\HW}[1]{} 
\newcommand{\IDEE}[1]{}
\newcommand{\EMMM}[1]{\underline{#1}} 
\renewcommand{\em}[1]{{\EMMM{#1}}}
\def\ED{

\newcommand{\DI}{\ d\Im(\lam)} \newcommand{\ES}{\mathcal{E}^{\rm spring}}
\newcommand{\wstarto}{\stackrel{*}{\wto}}

\newcommand{\dl}[1]{\hk{\xout{#1}}} \newcommand{\hk}[1]{{\color{blue}{#1}}} \newcommand{\cc}[1]{{\color{red}{#1}}}
\newcommand{\jv}[1]{{\color{blue}{#1}}} \newcommand{\hkc}[1]{{\color{blue}{hans:[#1]}}}

\newcounter{margcount} \setcounter{margcount}{0} 
\newcommand{\marginnote}[1] {\addtocounter{margcount}{1}$^{\arabic{margcount})}$ 
  \marginparsep = 1.5mm 
  \marginpar{\raggedright\footnotesize\itshape \arabic{margcount}) #1 \par}}
\newcommand{\hks}[1]{{\marginnote{\color{blue}{#1}}}}

\newcommand{\hkss}[1]{{\marginnote{\hk{#1}}}} \newcommand{\ccs}[1]{{\marginnote{\color{red}{#1}}}}

\renewcommand{\S}{\mathcal S}
\newcommand{\T}{\mathcal T}

\newcommand{\Y}{\mathbb{Y}}
\newcommand{\md}{{\rm \mbox{d}}}
\renewcommand{\O}{\Omega}
\newcommand{\be}{\begin{eqnarray}} \newcommand{\ee}{\end{eqnarray}}
\newcommand{\G}{\QQ_T}

\title{A sharp interface evolutionary model for shape memory alloys} %
\author[1]{Hans Kn\"{u}pfer} %
\author[2,3]{Martin Kru\v{z}\'{\i}k} %
\affil[1]{\small University of Heidelberg, Institute of Applied Mathematics and IWR, Im Neuenheimer Feld 294, 69120 Heidelberg, Germany} %
\affil[2]{Institute of Information Theory and Automation, of the CAS, Pod vod\'{a}renskou v\v{e}\v{z}\'{\i}~4, CZ-182 08 Prague 8, Czech Republic,} %
\affil[3]{Faculty of Civil Engineering, Czech Technical University, Th\'akurova 7, 166 29 Prague, Czech Republic}

\begin{document}

\maketitle

\begin{abstract}
  We show the existence of an energetic solution to a quasistatic evolutionary model of shape memory alloys. Elastic
  behavior of each material phase/variant is described by polyconvex energy density. Additionally, to every phase
  boundary, there is an interface-polyconvex energy assigned, introduced by M.~\v{S}ilhav\'{y} in \cite{Silhavy-2010}.
  The model considers internal variables describing the evolving spatial arrangement of the material phases and a
  deformation mapping with its first-order gradients.  It allows for injectivity and orientation-preservation of
  deformations. Moreover, the resulting material microstructures have finite length scales.

	\bigskip
	
	\noindent 
	{\bf Keywords:} polyconvexity; shape memory materials; rate-independent problems
	
\end{abstract}

\section{Introduction}

In elasticity theory, it is assumed that experimentally observed patterns are minimizers or stable states of some
energy.  Shape memory alloys in particular have a preferred high-temperature lattice structure called austenite and a
preferred low-temperature lattice structure called martensite.  Such shape memory alloys, as e.g. Ni-Ti, Cu-Al-Ni or In-Th,
have various technological applications, for an overview see e.g. \cite{MohdLearyEtal-2014}.  The austenitic phase has
only one phase/variant but the martensitic phase exists in many symmetry related phases/variants; the mixing of these
different phases can lead to the formation of complex microstructure.  In the continuum theory, the total energy of the
system is described in terms of a bulk energy which describes elastic stresses and an interfacial energy, concentrated
on the interfaces between the different phases. We establish existence of quasistationary solutions for a model, where
it is assumed that the bulk part of the energy is polyconvex while the interfacial part of the energy satisfies a
corresponding condition of interfacial polyconvexity introduced by \v{S}ilhav\'{y} \cite{Silhavy-2010,Silhavy-2011}. The
model describes the evolving spatial arrangement of the material phases and the deformation of the sample.  It allows
for injectivity and orientation-preservation of deformations. Moreover, the resulting material microstructures have
finite length scales.

\medskip

To investigate the existence of a global minimizer of the energy for static variational problems from elasticity,
different notions of convexity have been considered.  For problems with a single material phase, a well justified notion
of convexity which is sufficient to ensure the existence of a minimizer is the notion of polyconvexity due to Ball
\cite{Ball-1977,Ball-1981}. It is also fairly easy to construct examples of polyconvex functions which makes it attractive for continuum mechanics of solids.  On the other hand, in shape memory alloys, many different phases might coexist.  If
interfacial energy is not taken into account, then global minimizers of the energy in general do not exist. A way out is
to use relaxation methods, searching for the so-called quasiconvex envelope of the specific stored energy
\cite{Dacorogna-Book,Mueller-Notes} or using Young measures
\cite{KinderlehrerPedregal-1991,KinderlehrerPedregal-1994, KruzikMielkeRoubicek-2005,MielkeRoubicek-2003}. Let us point out some partial results
which have been obtained in this direction: We refer to \cite{BenesovaKruzik-2015} for a weak* lower semicontinuity
results for sequences of bi-Lipschitz orientation-preserving maps in the plane and to \cite{BenesovaKampschulte-2014}
for an analogous result along sequences of quasiconformal maps. Then \cite{KoumatosRindlerWiedemann-2013} found
relaxation including orientation preservation for $p< d$, where $d$ is a spatial dimension. Finally,
\cite{ContiDolzmann-2015} derived a relaxation result for orientation preserving deformations with an extra assumption
on the resulting functional, namely that the quasiconvex envelope is polyconvex. There also exist various phenomenological models 
of shape memory alloys which are convenient for  numerical computations; see e.g.~\cite{BessoudKruzikStefanelli-2013}. 

\medskip

On the other hand, models have been considered where interfacial energy is taken into account. Such models have been
e.g.  used to estimate the scaling of the minimal energy and to derive typical length scales of patterns. The minimal
scaling of the energy of an austenite-martensite interface has been studied by Kohn \& Müller and Conti in
\cite{KohnMueller-1992-2,KohnMueller-1994,Conti-2006} for a 2-d model problem, the three-dimensional case and more
realistic models have been investigated e.g. in
\cite{Zwicknagl-2014,ChanConti-2015,CapellaOtto-2009,CapellaOtto-2010,KnuepferKohn-2011,KnuepferKohnOtto-2012}, for
similar analysis on related models see e.g. \cite{ChoksiKohn-1998,ChoksiKohnOtto-1999,KnuepferMuratov-2011}. In these
models, either a $BV$-penalization of the interfacial has been used or a penalization of some $L^p$--norm for the
Hessian of the deformation function. In general, the specific form of the energy is, however, not clear from physical
considerations. In the literature, necessary and sufficient conditions for the specific form of the interfacial energy
have been investigated recently which allow for the existence of minimizers \cite{Fonseca-1989,Parry-1987}. Recently,
\v{S}ilhav\'{y} has introduced a notion of interface polyconvexity and has proved that this notion is sufficient to
ensure existence of minimizers for the corresponding static problem \cite{Silhavy-2010,Silhavy-2011}. In this note, we
extend this static model to a rate-independent evolutionary model and prove existence of an energetic solution.

\medskip

In shape memory allows, the stored energy density $W : \R^{3 \times 3} \to \R$ is minimized on wells $\SO(3)F_i$,
$i=0,\ldots, M$, defined by $M$ positive definite and symmetric matrices $F_0,\ldots, F_M$, each corresponding austenite
and $M$ variants of martensite, respectively. By the choice of reference configuration, we may furthermore assume $F_0:= \Id$ (the identity),
i.e. the stress-free strain of austenite is described just by the special orthogonal group $\SO(3)$. In nonlinear
elasticity, the energy density $W$ is usually formulated as a function of the right Cauchy-Green strain tensor $F^\top
F$. Note that this tensor maps the whole group $O(3)$ of orthogonal matrices with determinant $\pm 1$ onto the same
point. Thus, for example, $F\mapsto |F^\top F-\Id|$  is minimized on two
energy wells, i.e., on SO$(3)$ and also on O$(3)\setminus$SO$(3)$.  However, the latter set is not acceptable in
elasticity since corresponding deformations do not preserve the orientation. Additionally, notice that, for example,
considering arbitrary $Q\in {\rm O}(3)\setminus {\rm SO}(3)$ and an arbitrary $R\in \SO(3)$ such that $Q$ and $R$ are
rotations around the same axis of the Cartesian system then rank$(Q-R)=1$, i.e. $Q$ and $R$ are rank-one connected and determinant changes its sign on the line segment $[Q;R]$. Convex combinations of rank-one connected
matrices play a key role in relaxation approaches of the variational calculus \cite{BallJames-1987,BallJames-1992,Dacorogna-Book,KruzikLuskin-2003}.    This shows that is it important but also not
straightforward to ensure that  solutions of static or evolutionary problems are physically sound, in the sense that they
preserve orientation.  This is, in particular, unclear on models based on quasiconvexification as described above, since
usually there is no closed formula of the envelope at disposal and since physically justified conditions on deformations
as orientation-preservation and injectivity are not included in these models. On the other hand, our solutions are
constructed in a way such that the obtained time dependent deformations are orientation preserving and injective and no
additional regularization of variables is needed if passing from a static to an evolutionary model. Injectivity and orientation preservation is  not very often  considered in the theory of rate-independent processes. We refer to \cite{MielkeRoubicek-preprint} for treatment  of a model in nonlinear elastoplasticity.    

\medskip

\textit{Structure of the paper:} After introducing used notations, in Section 2, we first describe our model, the stored
elastic energy, loading, and dissipation. In Section 3, we state and proof our main result, the existence of an
energetic solution. As it is nowadays a standard procedure; cf.~e.g., \cite{FrancfortMielke-2006} we only sketch the
main steps and pay more attention to injectivity of deformations which is not frequently treated in the frameworks of
rate-independent evolutions. We refer, however,  to \cite{MielkeRoubicek-preprint} for numerical approaches to finite
elastoplasticity including injectivity.

\medskip

\textit{Notation:} The spaces $W^{1,p}$, $1 \leq p < \infty$, denote the standard Sobolev space of $L^p$-functions with
weak derivative in $L^p$. Further, $BV$  stands for the space of
integrable maps with bounded variations, see e.g. \cite{AmbrosioFuscoPallara-Book,EvansGariepy-Book} for references. For
a (measurable) set $E \subset \R^3$, we denote its three-dimensional Lebesgue measure by $\LL^3(E)$ and its
two-dimensional Hausdorff measure by $\HH^2(E)$. The space of vector valued Radon measures on $\Ome$ with values in
  $Y$ is denoted by $\MM(\Ome,Y)$.

\medskip

Let $\tilde\Ome\subset\Ome\subset\R^3$ be Lebesgue measurable sets and let
$B(x,r):=\{a\in\R^3:\, |x-a|< r\}$ . For $x\in\Ome$ we denote the the {\it density} of $\tilde\Ome$ at $x$ by
$\theta(\tilde\Ome,x):=\lim_{r\to 0}\mathcal{L}^3(\tilde\Ome\cap B(x,r))/\mathcal{L}^3(B(x,r))$ whenever this limit
exists. A point $x \in \Ome$ is called {\it point of density} of $\t \Ome$ if $\theta(\tilde\Ome,x)=1$.  If
$\theta(\tilde\Ome,x)=0$ for some $x\in\Ome$, then $x$ is called {\it point of rarefaction} of $\tilde\Ome$. The
measure-theoretic boundary $\partial^*\tilde\Ome$ of $\tilde\Ome$ is the set of all points $x\in\Ome$ such that either
$\theta(\tilde\Ome,x)$ does not exist or $\theta(\tilde\Ome,x) \nin \{ 0, 1 \}$. We call $\tilde\Ome$ a set of finite
perimeter if $\mathcal{H}^{2}(\partial^*\tilde\Ome)<+\infty$. Let $n\in\R^3$ be a unit vector and let $H(x,n):=\{\tilde
x\in\Ome:\, (\tilde x-x)\cdot n<0\}$. We say that $n$ is the (outer) measure-theoretic normal to $\tilde\Ome $ at $x$ if
$\theta(\tilde\Ome\cap H(x,-n),x)=0$ and $\theta((\Ome\setminus\tilde\Ome)\cap H(x,n),x)=0$.  The measure-theoretic
normal exists for $\HH^2$ almost every point in $\partial^*\tilde\Ome$, see e.g. \cite{EvansGariepy-Book, Silhavy-Book}.

\medskip

For two matrices $A = (a_{ij}), B = (b_{ij}) \in \R^{3\times 3}$, we define $A:B = a_{ij}b_{ij}$ with Einstein's sum
convention.  By $A \times n$ we denote the tensor defined by $(A \times n) b = A (n \times b)$, i.e. $(A \times n)_{kj}
= \eps_{\ell ij} a_{k\ell} n_i$, where $\eps_{\ell ij}$ is the Levi-Civita symbol. One can easily check that the cofactor matrix of $A
\in \R^{3\times 3}$ in terms of the Levi-Civita can be expressed as $\cof A$
   $= \frac 12 (\eps_{ik\ell} \eps_{jpq} a_{kp} a_{\ell q})_{ij}$.
   In particular, we get $\p_{a_{k \ell}} (\cof A)_{ij}$ $=$ $\frac 12 \p_{a_{k \ell}} (\eps_{ik\ell} \eps_{jpq} a_{kp}
   a_{\ell q})_{ij}$ $=$ $\eps_{ikq} \eps_{j\ell p} a_{qp}$. We refer e.g. to \cite{GurtinStruthers-1990} for a
   definition of the surface gradients $\nabla _S$. If $n\in\R^3$ is an outer unit normal to the surface $S$ , then
   $\nabla_S:=\nabla(\Id-n\otimes n)$, where  we recall that $\Id$ denotes the unit matrix in $\R^{3\times 3}$.

\medskip

\section{Model description}

\subsection{Elastic energy} \label{ss-static} %

{\it Admissible States: } We assume that the specimen in its reference configuration is represented by a bounded
Lipschitz domain $\Ome\subset\R^3$. We consider a shape memory alloy which allows for $M$ different variants of
martensite. The region occupied by the $i$-th variant of martensite is described by the set $\Ome_i \subset \Ome$ for $1
\leq i \leq M$, while the region occupied by austenite is given by $\Ome_0 \subset \Ome$.  We assume that the sets
$\Ome_i$ are open and have finite perimeter.  Furthermore, the sets $\Ome_i$ are pairwise disjoint for $0\leq i\leq M$
and $N := \Ome \BS \bigcup_i \Ome_i$ is a set of zero Lebesgue measure. The case $\Ome_i = \emptyset$ for some $0 \leq
i \leq M$ is not excluded. The partition of $\Ome$ into $\{\Ome_i\}_{i=0}^M$ can be then identified with a mapping
$z:\Ome \to \R^{M+1}$ such that $z_i(x)=1$ if $x\in\Ome_i$ and $z_i(x)=0$ else.  We call $z$ the {\it partition map}
corresponding to $\{\Ome_i\}_{i=0}^M$. Clearly, with the sets $\Ome_i$ chosen as before, we have $\sum_{i=0}^Mz_i(x)=1$
for almost every $x\in\Ome$ and the function $z$ is of bounded variation. We hence consider $z \in \ZZ$, where
\begin{align} \NT %
  \ZZ := \Big \{ z \in {\rm BV}(\Ome,\{ 0, 1 \}^{M+1}) : \ &z_i z_j = 0 \text{ for $i \neq j$, } \sum_{i=0}^M z_i = 1
  \text{ a.e.~in $\Ome$} \Big \}.
\end{align}
In order to describe the state of the elastic material, we also need to introduce the deformation function $y \in
W^{1,p}(\Ome,\R^3)$, $p > 3$, which describes the deformation of the elastic body with respect to the reference
configuration $\Ome$. We hence consider deformations $y \in \YY$, where
\begin{align} \NT %
  \YY = \Big\{ y \in W^{1,p}(\Ome,\R^3) \ : \ \det \nabla y >0 \text{ a.e.}\ , \int_\Ome \det\nabla y(x)\,\md x\leq \mathcal{L}^3(y(\O)) \Big\}, \NT
\end{align}
where we will always use the assumption $p > 3$. The integral inequality together with the orientation-preservation is
the so-called Ciarlet-Ne\v{c}as condition which ensures invertibility of $y$ almost everywhere in $\O$
\cite{Ciarlet-Book,CiarletNecas-1987}. In the following, we will assign to each state of the material $(y,z) \in \YY
\times \ZZ$ an elastic energy $\EE$.  In our model, the energy consists of a bulk part $E_{\rm b}$, penalizing deformation
within the single phases, an interfacial energy $E_{\rm int}$, measuring deformation of the interfaces between the
phases and a contribution $L(t,\cdot)$ which measures work of external loads, i.e.
\begin{align}
  \EE(t,y,z):=E_{\rm b}(y,z)+E_{\rm int}(y,z)-L(t,y).
\end{align}
Here $t$ denotes time to indicate that we will deal with time-dependent problems.
We will specify these three parts of the energy in the following.

\medskip

{\it Bulk energy:} The total bulk energy of the specimen has the form
\begin{align} \label{stat-bulk} %
  E_{\rm b}(y,z):=\int_\Ome W(z(x),\nabla y(x))\ dx,
\end{align}
where we assume that the specific energy $W : \R^{M+1} \times \R^{3 \times 3} \to \R\cup\{+\infty\}$ of the specimen can be
written as
\begin{align}
  W(z,F) := \sum_{i=0}^M z_i \hat W_i(F) =:z \cdot \hat W(F),
\end{align}
where $\hat W_i$, $0 \leq i \leq M$, is the specific energy related to the $i$-th phase of the material and $\hat
W:=(\hat W_0,\ldots, ,\hat W_M)$. We will work in the framework of hyperelasticity, where the first Piola-Kirchhoff
stress tensors of austenite and martensite have polyconvex potentials denoted by $\hat W_0$ (austenite) and $\hat W_i$,
$i=1,\ldots, M$ for each variant of martensite, see e.g. \cite{Silhavy-2010} and the
references therein.  For $0\leq i\leq M$, we therefore assume
\begin{align} \label{W-ass1} %
  \hat W_i(F):=
  \begin{cases}
    h_i(F,\cof F,\det F) & \mbox{ if } \det F>0, \\
    +\infty \mbox{ otherwise}
  \end{cases}
\end{align}
for some convex functions $h_i:\R^{19} \to \R$. We use the following additional standard assumptions on the specific bulk
energies $\hat W_i$. For $0\leq i\leq M$ and $F\in\R^{3\times 3}$, we assume that for some $C>0$ and $p>3$
\begin{align}
  &\hat W_i(F)\ge C(-1+|F|^p) &&\forall F \in \R^{3 \times 3}\ , \label{W-ass2} \\
  &\hat W_i(RF)=\hat W_i(F) && \forall R\in{\rm SO}(3), F \in \R^{3\times 3} \ , \label{W-ass3}\\
  &\lim_{\det F\to 0_+} \hat W_i(F)=+\infty\ . \label{W-ass4}
\end{align}

\medskip

{\it Interfacial energy:} We consider the interfacial energy in the form introduced by \v{S}ilhav\'{y} in
\cite{Silhavy-2010,Silhavy-2011}: We hence assume that the specific interfacial energy $f_{ij}$ between the two
different phases $i, j\in\{0,\ldots, M\}$ can be written in the form
\begin{align} \label{intcon-1} %
  \frac12 f_{ij}(F, n)=g_i(F,n)+g_j(F,n),
\end{align}
where $F\in\R^{3\times 3}$ and $n\in\R^3$ is a unit vector such that $F n=0$,
We assume
\begin{align} \label{intcon-2} %
  g_i(F,n):= \Psi_i(n,F\times n, \cof F\, n),
\end{align}
where the functions $\Psi_i: \R^{15} \to \R$ are nonnegative convex and positively one-homogeneous for $i=0,
\ldots, M$. Here, $F\times n:\R^3\to \R^3$ is for any $F\in\R^{3\times 3}$ and any $n, a \in\R^3$ defined as $(F\times
n)a:=F(n\times a)$. As in \cite{Silhavy-2010}, we assume for $0 \leq i \leq M$, $ \forall F\in\R^{3\times 3}$, $\forall
n\in S^2$
\begin{align}
  g_i(RF,n) &=g_i(F,n) \ \qquad  \forall R\in \SO(3),  \label{g-ass2} \\
  g_i(F,n) &=g_i(F,-n), \label{g-ass3}
\end{align}
As in \cite{Silhavy-2010}, we assume that there is some $c>0$ such that
\begin{align}
  \Psi_i(A)\ge c|A| \ \label{g-ass1}\ .
\end{align}
for all $0\leq i\leq M$ and all $A\in \R^{15}$. We introduce a subspace $\QQ \subset \YY \times \ZZ$ of functions with
``finite interfacial energy'', using a slightly modified version of \cite[Def. 3.1]{Silhavy-2010}. It is given as
follows:
\begin{definition}[Interfacial energy] \label{def-intok} %
  For any pair $(y,z) \in \YY \times \ZZ$ let $S_i=\partial^*\Ome_i \cap\Ome$ where $\Ome_i := \supp z_i$ and
  $\partial^*\Ome_i$ is the measure-theoretic boundary of $\Ome_i$ with outer (measure-theoretic) normal $n_i$. We
  denote by $\QQ \subset \YY \times \ZZ$ the set of all pairs $(y,z) \in \YY \times \ZZ$ such that for every $0 \leq i
  \leq M$ there exists a measure $J_i:=(a_i,H_i,c_i)\in \MM(\O;\R^{15})$ with
  \begin{align}\label{measures1}
    a_i := n_i \HH^2_{|S_i}, && %
    H_i:= \nabla_{S_i} y\times n_i \HH^2_{|S_i} && \text{and } && %
    c_i:= (\cof\nabla_{S_i} y) n_{|S_i}\ .
  \end{align}
   The interfacial energy is then defined as
  \begin{align} \label{stat-interface} E_{\rm int}(y,z):=
    \begin{TC}
      \displaystyle \sum_{i=0}^M \int_\Ome \Psi_i\left(\frac{\md J_i}{\md |J_i|}\right)\ \md|J_i|\ %
      &\text{for $(y,z) \in \QQ$,} \\
      \infty &\text{else.}
    \end{TC}
  \end{align}
  Here $|J_i|$ denotes the total variation of the measure $J_i$.
\end{definition}
We recall that the function $f_{ij}$ is called interface quasiconvex if
  \begin{align} \label{int-qc} %
    \int_{\S} f(\nabla_\S y,n) d\HH^2 \geq \HH^2(\mathcal T) f(G,m)
  \end{align}
  for every surface deformation gradient $G$, every unit vector $m$ with $Gm = 0$, every planar two-dimensional region
  $\mathcal T$ with normal $m$, every (curved) surface $\S$ with normal $n$ and every smooth map $y : \S \to \R^3$ with
  ${\rm bd} \ = {\rm bd} \T$ (where ${\rm bd} \S = {\rm bd} \T$ denote the relative boundaries of the two
  two-dimensional surfaces) and such that $y = Gx$ for $x \in {\rm bd} \T$, see \cite{Silhavy-2010,Silhavy-2011}. A
  surface energy is called Null-Lagrangian if \eqref{int-qc} is satisfied with equality. Furthermore, it has been shown
  in \cite{Silhavy-2010b} that $f$ is an interface Null-Lagrangian if and only if $f$ is a linear function of $n$, $F
  \times n$ and $\cof Fn$. This motivates the definition of interface polyconvexity \eqref{intcon-1}--\eqref{intcon-2},
  in the analogy to the definition of the standard notion of polyconvexity. The set of configurations $\QQ$ in
  Definition \ref{def-intok} is the natural space where an energy of type \eqref{intcon-1}--\eqref{intcon-2} can be
  defined. Let us remark that the measures $H_i$ and $c_i$ can be expressed as
\begin{align} \label{form-1} %
  \int_\Ome v \ \md H_i = \int_{\Ome_i} \nabla y\ (\nabla \times v) \ dx, && \int_\Ome v\cdot \md c_i = \int_{\Ome_i}
  (\cof\nabla y) : \nabla v\ dx
\end{align} %
for all $v\in C^\infty_0(\O;\R^3)$. Indeed, for $k \in \{ 1, 2,3 \}$ and $0\le i\le M$, we have
\begin{align*}
  \int_{\Ome_i} [\nabla y (\nabla \times v)]_k %
  &= \int_{\Ome_i} [\p_j y_k \eps_{j\ell m} \p_\ell v_m] dx %
  =   \int_{\p \Ome_i} [n_\ell \p_j y_k \eps_{j\ell m}  v_m ]  dx \\
  &= \int_{\p \Ome_i} [\nabla v]_{kj} [n \times v]_j dx = \int_{\p \Ome_i} [(\nabla y \times n) v]_k dx,
\end{align*}
since $\nabla \times \nabla y = 0$.  With the notation $(\cof \nabla y)_{ij} = b_{ij}$, we also have
\begin{align*}
  \int_{\Ome_i} (\cof \nabla y) : (\nabla v) dx %
  &= \int_{\Ome_i} [b_{kj} \p_j v_k] dx %
  =   - \int_{\Ome_i} [\p_j b_{kj}   v_k]  dx + \int_{\p \Ome_i} [n_j b_{kj}  v_k ]  dx \\
  &= \int_{\p \Ome_i} [(\cof \nabla y) n]_{k} v_k dx = \int_{\p \Ome_i} (\cof \nabla y) n \cdot v dx,
\end{align*}
where we used the Piola identity $\nabla \cdot (\cof \nabla y) = 0$.

\medskip

We also note that by the assumption \eqref{g-ass1}, we have the bound
\begin{align}
  \|D z\|_{\mathcal{M}(\O;\R^{(M+1)\times 3})} \ \leq \ C E_{\rm int}(y,z).
\end{align}
for some constant $C < \infty$.  Consequently, the norm $\|z\|_{{\rm BV}(\O;\R^{M+1})}$ is controlled in terms of the
interfacial energy in our setting. On the other hand, the norm $\|D z\|_{\mathcal{M}(\O;\R^{(M+1)\times 3})}$
  satisfies the conditions in Definition \ref{def-intok}. Indeed, this follows from the choice
  $g_i(F,n)=\alpha|F|=\alpha|F\times n|$ for $\alpha>0$. Another example of an interfacial energy which is included in
  the Definition \eqref{def-intok} is given by the choice $g_i(F,n)=\alpha |\cof F n|$, see \cite{Silhavy-2011} for more
  details. Notice that the first example penalizes surface gradients which are nonconstant along interfaces while the latter one 
	increases with the area of the interface. 

\medskip

{\it Body and surface loads:} We assume that the body is exposed to possible body and surface loads, and that it is
elastically supported on a part $\Gamma_0$ of its boundary. The part of the energy related to this loading is given by a
functional $L\in C^1([0,T]; W^{1,p}(\O;\R^3))$ in the form
\begin{align} \label{staticloading} %
  L(t,y) := &\int_\Ome b(t) \cdot y \ dx+\int_{\Gamma_1} s(t) \cdot y \ \md \HH^2(x) %
  +\frac{K}{2}\int_{\Gamma_0}|y-y_D(t)|^2\,\md \HH^2(x).
\end{align}
Here, $b(t,\cdot):\Ome \to \R^3$ represents the volume density of some given external body forces and
$s(t,\cdot):\Gamma_1\subset\partial\Ome \to \R^3$ describes the density of surface forces applied on a part $\Gamma_1$
of the boundary. The last term in \eqref{staticloading} with $y_D(t,\cdot)\in W^{1,p}(\O;\R^3)$ represents energy of a
spring with a spring stiffness constant $K>0$. Thus our specimen is elastically supported on $\Gamma_0$ in such a way,
that for $K\to \infty$ $y$ is forced to be close to $y_D$ on $\Gamma_0$ in the sense of the $L^2(\Gamma_0;\R^3)$ norm.
A term of this type already appeared in \cite{KruzikOtto-2004} and its static version also in
\cite{MielkeRoubicek-2003}.  Namely, prescribing a boundary condition from $W^{1-1/p,p}(\partial\Ome;\R^3)$
\cite{Marschall-1987}, it is generally not known whether it can be extended to the whole $\Omega$ in such a way that the
extension lives in $\YY$. It is, to our best knowledge, an unsolved problem in three dimensions and therefore it is
generically assumed in nonlinear elasticity that such an extension exists; cf.~\cite{Ciarlet-Book}, for instance.  The
last term in \eqref{staticloading} overcomes this drawback. Namely, if $y_D$ cannot be extended from the boundary as an
orientation-preserving map the term in question will never be zero regardless values of $K>0$.

%

\medskip

\subsection{Dissipation}
Evolution is typically connected with dissipation of energy. Experimental evidence shows that it is a reasonable
approximation in a wide range of rates of external loads to anticipate  a rate-independent dissipation mechanism. In order
to set up such a process, we need to define a suitable dissipation function. Since we consider  rate-independent
processes, this dissipation will be positively one-homogeneous.  We associate the dissipation to the magnitude of the
time derivative of $z$, i.e., to $|\dot z|_{M+1}$, where $|\cdot|_{M+1}$ is a norm on $\R^{M+1}$. Therefore, the specific
dissipated energy associated to a change of the variant distribution from $z^1$ to $z^2$ is postulated as in \cite{DeSimoneKruzik-2013}
\begin{align}\label{dissipation}
  D(z^1,z^2):=|z^1-z^2|_{M+1}.
\end{align}
Then the total dissipation reads
\begin{align*}
  \mathcal{D}(z^1,z^2):=\int_\Ome D(z^1(x),z^2(x))\ dx \ .
\end{align*}
The $\DD$-dissipation of a curve $z : [0,T] \to BV(\Ome,\{ 0,1 \})$ with $[s,t] \subset [0,T]$ is correspondingly given
by  (see e.g.~\cite{FrancfortMielke-2006})
\begin{align*}
  {\rm Diss}_\DD(z,[s,t]) := \sup \Big \{ \sum_{j=1}^N \DD(z(t_{i-1}),z(t_i)) : N \in \N, s = t_0 \leq \ldots \leq t_N =
  t \Big \}.
\end{align*}

\subsection{Energetic solution}

\medskip

Suppose, that we look for the time evolution of $t\mapsto y(t)\in \YY$ and $t\mapsto z(t)\in \ZZ$ during a process time
interval $[0,T]$ where $T>0$ is the time horizon. We use the following notion of solution from
\cite{FrancfortMielke-2006}, see also \cite{MielkeTheil-2004,MielkeTheilLevitas-2002}: For every admissible
configuration, we ask the following conditions to be satisfied for all $t \in [0,T]$.
\begin{definition}[Energetic solution]
  We say that $(y,z) \in \YY\times \ZZ$ is an energetic solution to $(\EE,\DD)$ on the time interval $[0,T]$ if
  $t \mapsto \p_t E(y(t),z(t)) \in L^1((0,T))$ and if for all $t \in [0,T]$, the stability condition
  \begin{align} \label{stability} %
    &\EE(t, y(t),z(t))\leq \EE(t,\tilde y,\tilde z)+\mathcal{D}(z(t),\tilde z) \qqquad %
    \text{$\forall (\tilde y,\tilde z)\in\QQ$}.
  \end{align}
  and the condition of energy balance
  \begin{align} \label{energy-balance} %
    \begin{aligned}
      &\EE(t, y(t),z(t))+{\rm Diss}_\DD(z;[0,t]) %
      =\EE_0 +\textstyle \int_0^t \frac{\partial \EE}{\partial t}(s, y(s),z(s))\,\md s \hspace{-2ex}
    \end{aligned}
  \end{align}
  where $\EE_0 = \EE(0,y(0),z(0))$, are satisfied.
\end{definition}
An important role in the theory of rate-independent solutions is played by the so-called stable states defined for each
$t\in[0;T]$. We set
\begin{align*}
  \S(t):=\{(y,z)\in\YY\times \ZZ:\, \EE(t, y,z)\leq \EE(t,\tilde y,\tilde z)+\mathcal{D}(z,\tilde z)\,\forall (\tilde
  y,\tilde z)\in\QQ\}.
\end{align*}
Note that by \eqref{stability}, any energetic solution $(y,z)$ is stable for any fixed time.

\section{Existence of the energetic solution}

A standard way how to prove the existence of an energetic solution is to construct time-discrete minimization problems
and then to pass to the limit. Before we give the existence proof we need some auxiliary results. For given $N\in\N$ and
for $0\leq k\leq N$, we define the time increments $t_k:=kT/N$. Furthermore, we use the abbreviation $q:=(y,z)\in\QQ$.
Assume that at $t=0$ there is given an initial distribution of phases $z^0\in\ZZ$ and $y^0\in\YY$ such that $q^0=(y^0,
z^0)\in \S(0)$.  For $k=1,\ldots, N$, we define a sequence of minimization problems
\begin{align}\label{incremental}
  \text{minimize } \EE(t_k, y,z)+\DD(z,z^{k-1})\ ,\ (y,z)\in \QQ\ .
\end{align}
We denote a minimizer of \eqref{incremental} for a given $k$ as $(y^k,z^k)\in\QQ$.  The following proposition shows that
a minimizer always exists if the elastic energy is not identically infinite on $\QQ$.

\begin{lemma} \label{lem-dav} %
  Assume that $p>3$, \eqref{W-ass1}-\eqref{W-ass4}, \eqref{intcon-2}, \eqref{g-ass3}-\eqref{g-ass1} hold and let $L\in
  C^1([0,T]; W^{1,p}(\Omega;\R^3))$.  Let $q^N_0 := (y_0,z_0)\in\QQ$
  satisfy $\EE(0,y,z)<+\infty$.  Then there exists a solution $q^N_k := (y_k,z_k)$ to \eqref{incremental} for each
  $1\leq k\leq N$.  Moreover, $q^N_k\in \S(t_k)$ for all $1 \leq k \leq N$.  
\end{lemma}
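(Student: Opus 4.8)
The plan is to prove Lemma~\ref{lem-dav} by the direct method of the calculus of variations, exploiting the coercivity built into the hypotheses and the weak lower semicontinuity of all three parts of $\EE$ on $\QQ$. First I would fix $1\le k\le N$ and observe that the functional $(y,z)\mapsto\EE(t_k,y,z)+\DD(z,z^{k-1})$ is proper: by assumption $q^N_0\in\QQ$ has finite energy, and testing the incremental problem with $(y_0,z_0)$ (or with the minimizer of the previous step, if one argues inductively) shows the infimum $I_k$ is finite. Next I would take a minimizing sequence $(y_n,z_n)\in\QQ$. From \eqref{W-ass2} we get a uniform bound on $\|\nabla y_n\|_{L^p}$, and together with the loading term $L(t_k,\cdot)$ — which is controlled because the spring term in \eqref{staticloading} together with the body/surface load terms can be absorbed by Young's inequality into the $|F|^p$ growth, and $L\in C^1([0,T];W^{1,p})$ so $L(t_k,\cdot)$ is a bounded linear functional up to the quadratic boundary term — one obtains a bound on $\|y_n\|_{W^{1,p}}$. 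Here one needs the boundary spring term to pin down the sequence: $\frac{K}{2}\int_{\Gamma_0}|y_n-y_D(t_k)|^2$ bounded plus $\|\nabla y_n\|_{L^p}$ bounded gives $\|y_n\|_{W^{1,p}}$ bounded via a Poincaré-type inequality. Simultaneously, the interfacial energy bound $E_{\rm int}(y_n,z_n)\le C$ together with \eqref{g-ass1} gives $\|Dz_n\|_{\MM}\le C E_{\rm int}$, hence $\|z_n\|_{BV}$ bounded, and also bounds the total variations of the measures $J_i^{(n)}=(a_i^{(n)},H_i^{(n)},c_i^{(n)})$.

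Then I would extract subsequences: $y_n\wto y$ in $W^{1,p}$ (so $y_n\to y$ uniformly since $p>3$), $z_n\wstarto z$ in $BV$ and $z_n\to z$ in $L^1$ (hence pointwise a.e.\ along a further subsequence), and the measures $J_i^{(n)}\wstarto J_i$ in $\MM(\O;\R^{15})$. The membership $(y,z)\in\YY$ is preserved: $\det\nabla y>0$ a.e.\ follows from the polyconvexity structure \eqref{W-ass1} together with \eqref{W-ass4} (the standard argument — $\cof\nabla y_n\wto\cof\nabla y$ in $L^{p/2}$, $\det\nabla y_n\wto\det\nabla y$ in $L^{p/3}$, and if $\det\nabla y$ vanished on a positive-measure set the energy would blow up by \eqref{W-ass4}), and the Ciarlet--Ne\v{c}as condition $\int_\O\det\nabla y\,\md x\le\LL^3(y(\O))$ passes to the weak limit by the argument of \cite{CiarletNecas-1987} using uniform convergence of $y_n$. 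To see $(y,z)\in\QQ$ and that the limiting measures are exactly the $J_i$ attached to $(y,z)$ via \eqref{measures1}, I would use the distributional characterization \eqref{form-1}: for every $v\in C_0^\infty(\O;\R^3)$, $\int_\O v\,\md H_i^{(n)}=\int_{\Ome_i^{(n)}}\nabla y_n(\nabla\times v)\,dx\to\int_{\Ome_i}\nabla y(\nabla\times v)\,dx$ because $\mathbf{1}_{\Ome_i^{(n)}}\to\mathbf{1}_{\Ome_i}$ in $L^1$ and $\nabla y_n\wto\nabla y$ in $L^p$ (product of weak-$L^p$ and strongly-convergent bounded sequence), and similarly for $c_i^{(n)}$ using $\cof\nabla y_n\wto\cof\nabla y$; this identifies the weak-$*$ limit of $H_i^{(n)}$ with the $H_i$ of the limit configuration, and likewise $a_i$, $c_i$.

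Finally I would assemble lower semicontinuity. For $E_{\rm b}$: since each $\hat W_i$ is polyconvex with the growth \eqref{W-ass2} and $z_n\to z$ pointwise with values in $\{0,1\}^{M+1}$, standard polyconvex lower semicontinuity (Ball's theorem, in the form handling the $z$-dependence as in \cite{Silhavy-2010}) gives $E_{\rm b}(y,z)\le\liminf E_{\rm b}(y_n,z_n)$. For $E_{\rm int}$: each $\Psi_i$ is convex, nonnegative, positively $1$-homogeneous, so $\mu\mapsto\int\Psi_i(\frac{\md\mu}{\md|\mu|})\,\md|\mu|$ is weakly-$*$ lower semicontinuous on $\MM(\O;\R^{15})$ (Reshetnyak); applied to $J_i^{(n)}\wstarto J_i$ this yields $E_{\rm int}(y,z)\le\liminf E_{\rm int}(y_n,z_n)$. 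For the loading, $-L(t_k,\cdot)$ is (sequentially weakly) lower semicontinuous because the linear part is weakly continuous and the spring term is convex and continuous. The dissipation $\DD(\cdot,z^{k-1})$ is continuous along $z_n\to z$ in $L^1$. Adding up, $(y,z)$ is a minimizer, proving existence. The stability claim $q^N_k\in\S(t_k)$ is then the by-now-standard one-step argument: given any competitor $(\tilde y,\tilde z)\in\QQ$, minimality of $(y_k,z_k)$ for \eqref{incremental} gives $\EE(t_k,y_k,z_k)+\DD(z_k,z^{k-1})\le\EE(t_k,\tilde y,\tilde z)+\DD(\tilde z,z^{k-1})$, and the triangle inequality $\DD(\tilde z,z^{k-1})\le\DD(\tilde z,z_k)+\DD(z_k,z^{k-1})$ (valid since $D(z^1,z^2)=|z^1-z^2|_{M+1}$ is a metric) yields \eqref{stability} at $t=t_k$ after cancelling $\DD(z_k,z^{k-1})$; for $k=0$ the hypothesis $q^N_0\in\S(0)$ is assumed. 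I expect the only genuinely delicate point to be the joint handling of the $(y,z)$-coupling in both $E_{\rm b}$ and $E_{\rm int}$ — in particular verifying that the weak-$*$ limit measures really are the $J_i$ of the limit pair (so that $E_{\rm int}$ is being compared correctly rather than on a spurious configuration), which is where the identities \eqref{form-1} do the essential work; the coercivity/compactness steps are routine given the strong growth assumptions.
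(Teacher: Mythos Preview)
Your proposal is correct and follows essentially the same route as the paper's proof: direct method with coercivity from \eqref{W-ass2} and \eqref{g-ass1}, extraction of weak/weak-$*$ subsequences, identification of the limit measures $J_i$ via the distributional identities \eqref{form-1}, and lower semicontinuity via polyconvexity for $E_{\rm b}$ and Reshetnyak (the paper cites \cite[Thm.~2.38]{AmbrosioFuscoPallara-Book}) for $E_{\rm int}$. You in fact supply more detail than the paper on the coercivity bookkeeping (the role of the spring term) and on the triangle-inequality argument for stability, but there is no substantive difference in strategy.
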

\begin{proof}
  The proof follows the same lines as the proof of \cite[Thm.~3.3]{Silhavy-2010}. We apply the direct method of the
  calculus of variations. We denote the elements of the minimizing sequence by a lower index in brackets in order to
  distinguish it from the components of $z=(z_0,\ldots, z_M)$.  Fix $k$, so that $z^{k-1}\in\ZZ$ is given. Let
  $\{(y_{(j)}, z_{(j)})\}_{j\in\N}\subset\QQ$ be a minimizing sequence for $\EE(t_k, \cdot,\cdot)+\DD(\cdot,z^{k-1})$.
  Using the growth conditions \eqref{W-ass2}, \eqref{g-ass2}, and in view of the form of $L$, it follows that there is
  $C>0$ such that $\|y_{(j)}\|_{W^{1,p}(\Ome;\R^3)}+\|z_{(j)}\|_{{\rm BV}(\Ome;\R^{M+1})}\leq C$ for all
  $j\in\N$. Furthermore, 
  \begin{align*}
    \sup_{j}(\|\cof \nabla y_{(j)}\|_{L^{p/2}(\Ome;\R^{3\times 3})} + \|\det\nabla y_{(j)}\|_{L^{p/3}(\Ome)})<+\infty,
  \end{align*}
  where $p/3>1$ by our assumption $p > 3$.  Consequently, after taking a subsequence, we may assume that
  $y_{(j)}\wto y$ in $W^{1,p}(\Ome;\R^3)$, $\det\nabla y_{(j)}\wto \det\nabla y$ in $L^{p/3}(\Ome)$, $\cof\nabla
  y_{(j)}\wto \cof\nabla y$ in $L^{p/2}(\Ome;\R^{3\times 3})$, and $z_{(j)}\stackrel{*}\wto z$ in ${\rm
    BV}(\O;\R^{M+1})$.  In particular, we have $z_{(j)}\to z$ in $L^1(\O;\{0,1\}^{M+1})$ and $z\in \ZZ$. Moreover, in
  view of \eqref{form-1}, $(J_i)_{(j)}$ converges weakly* in measures to $J_i$ as $j\to \infty$ for all $0\le i\leq M$ .
  Standard results for polyconvex materials \cite{Ball-1977, Ciarlet-Book,Silhavy-2010} show $\liminf_{j\to
    \infty}E_b(y_{(j)},z_{(j)})\ge E_b(y,z)$. Similarly, $\liminf_{j\to \infty}L(t_k, y_{(j)})\ge L(t_k, y)$ and
  $\lim_{j\to \infty}\DD(z_{(j)},z^{k-1}) =\DD(z,z^{k-1})$ due to the strong convergence of $z_{(j)}\to z$ in
  $L^1(\Ome;\R^{M+1})$.  Finally,
  \begin{align*}
    \liminf_{j\to \infty} E_{\rm int}(y_{(j)},z_{(j)})\ge E_{\rm int}(y,z)
  \end{align*}
  due to \cite[Thm.~2.38]{AmbrosioFuscoPallara-Book}.  Thus, $(y,z)\in\YY\times \ZZ$. Using weak sequential continuity of $y\mapsto\cof\nabla y$ and $y\mapsto\nabla y$ we see that 
	the limiting measures $J_i$ have the form of \eqref{measures1}. This together with  a limit passage in the Ciarlet-Ne\v{c}as condition (see 
  \cite[Thm.~5]{CiarletNecas-1987}) shows that  $(y,z)\in\QQ$. Namely, $y$ is injective almost everywhere in $\Ome$ and
  $\det\nabla y>0$ almost everywhere in $\Ome$. From \eqref{incremental}, one furthermore easily sees
    that $q^N_k\in\S(t_k)$ for all $1 \leq k \leq N$.
\end{proof}
Denoting by $B([0,T];\YY)$ the set of bounded maps $t\mapsto y(t)\in\YY$ for all $t\in[0,T]$, we have the following
result showing the existence of an energetic solution.
\begin{theorem}
  Let $T>0$, $p>3$, $y_{\rm D}\in C^1([0,T]; W^{1,p}(\O;\R^3))$, \eqref{W-ass1}-\eqref{W-ass4}, \eqref{intcon-2},
  \eqref{g-ass3}-\eqref{g-ass1}. Let $(y(0),z(0))\in S(0)$ and Let there be $(y,z)\in\QQ$ such that
  $\EE(0,y,z)<+\infty$.  Then there is and energetic solution to the problem $(\EE,\DD)$ such that $y\in
  B([0,T];\YY)$, $z\in {\rm BV}([0,T]; L^1(\O;\R^{M+1})\cap L^\infty(0,T;\ZZ)$.
\end{theorem}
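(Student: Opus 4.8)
\textit{Overall strategy.} I would follow the classical time-incremental (Rothe) scheme for rate-independent systems in the energetic formulation; see \cite{FrancfortMielke-2006,MielkeTheil-2004,MielkeTheilLevitas-2002}. Take as initial datum $q^0=(y^0,z^0):=(y(0),z(0))\in\S(0)$, which by hypothesis has finite energy, so that Lemma~\ref{lem-dav} produces incremental minimizers $q^N_k=(y^N_k,z^N_k)\in\QQ$ of \eqref{incremental} for every $N\in\N$ and $0\le k\le N$, all lying in $\S(t_k)$. Let $\bar q^N(\cdot)$ be the left-continuous piecewise-constant interpolant, $\bar q^N(t)=q^N_k$ on $(t_{k-1},t_k]$, $\bar q^N(0)=q^0$. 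The proof then consists of: (i) uniform a priori bounds and a discrete energy inequality; (ii) a Helly-type selection of a limit curve; (iii) stability of the limit; (iv) energy balance.

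\textit{A priori bounds.} Testing the minimality of $q^N_k$ against the competitor $q^N_{k-1}$ in \eqref{incremental}, using $\partial_t\EE=-\partial_tL$ and $L\in C^1([0,T];W^{1,p})$, yields the two-sided discrete inequality $\EE(t_k,q^N_k)+\DD(z^N_{k-1},z^N_k)\le\EE(t_{k-1},q^N_{k-1})+\int_{t_{k-1}}^{t_k}\partial_t\EE(s,q^N_{k-1})\,\md s$. Summation over $k$, together with the bulk coercivity \eqref{W-ass2} (which controls $\|y^N_k\|_{W^{1,p}}$ and hence $\cof\nabla y^N_k$ in $L^{p/2}$ and $\det\nabla y^N_k$ in $L^{p/3}$), the interfacial coercivity \eqref{g-ass1} (which controls $\|z^N_k\|_{BV}$), Young's inequality applied to the loading term, and a discrete Grönwall argument, gives bounds uniform in $N$ and $k$ for $\|y^N_k\|_{W^{1,p}}+\|z^N_k\|_{BV}+\EE(t_k,q^N_k)$ and for ${\rm Diss}_\DD(z^N;[0,T])$; in particular $s\mapsto\partial_t\EE(s,\bar q^N(s))$ is bounded in $L^\infty(0,T)$ uniformly in $N$.

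\textit{Limit passage and stability.} Since ${\rm Diss}_\DD(z^N;[0,T])\le C$ and $\DD$ is continuous for the $L^1$-topology on $\ZZ$, a generalized Helly selection principle (as in \cite{FrancfortMielke-2006}) provides a subsequence and a curve $z:[0,T]\to\ZZ$ with $z^N(t)\to z(t)$ in $L^1(\Ome;\R^{M+1})$ for all $t$ and ${\rm Diss}_\DD(z;[0,s])\le\liminf_N{\rm Diss}_\DD(z^N;[0,s])$. For the deformations one argues exactly as in the proof of Lemma~\ref{lem-dav}: at each fixed $t$, $\{y^N(t)\}$ is bounded in $W^{1,p}$, so along a further subsequence $y^N(t)\wto y(t)$ with weak convergence of cofactors and determinants; passing to the limit in the Ciarlet--Ne\v{c}as condition via \cite[Thm.~5]{CiarletNecas-1987} and using \eqref{W-ass4} keeps $\det\nabla y(t)>0$ a.e.\ and $y(t)$ injective a.e., while the identification \eqref{form-1} of the measures $J_i$ together with their weak$^*$ convergence and the lower semicontinuity of $E_{\rm int}$ (\cite[Thm.~2.38]{AmbrosioFuscoPallara-Book}) shows $(y(t),z(t))\in\QQ$; a measurable selection $t\mapsto y(t)$ is obtained by diagonalising on a countable dense set of times as in \cite{FrancfortMielke-2006}. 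Stability of the limit, $(y(t),z(t))\in\S(t)$, is then immediate: the ``mutual recovery sequence'' for a competitor $(\tilde y,\tilde z)\in\QQ$ is the \emph{constant} sequence $(\tilde y,\tilde z)$, because in $\EE(t^N,q^N(t^N))\le\EE(t^N,\tilde y,\tilde z)+\DD(z^N(t^N),\tilde z)$ (valid at the discrete time $t^N\to t$) one has $\liminf_N\EE(t^N,q^N(t^N))\ge\EE(t,y(t),z(t))$ by the lower semicontinuity of Lemma~\ref{lem-dav} and time-continuity of $L$, $\EE(t^N,\tilde y,\tilde z)\to\EE(t,\tilde y,\tilde z)$ since only $L$ carries time, and $\DD(z^N(t^N),\tilde z)\to\DD(z(t),\tilde z)$ by strong $L^1$-convergence, giving \eqref{stability}.

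\textit{Energy balance and the main obstacle.} The upper bound $\EE(t,y(t),z(t))+{\rm Diss}_\DD(z;[0,t])\le\EE_0+\int_0^t\partial_t\EE(s,y(s),z(s))\,\md s$ follows by passing to the limit in the summed discrete inequality: lower semicontinuity of $\EE(t,\cdot)$ and of the dissipation handle the left-hand side, and $\int_0^t\partial_t\EE(s,\bar q^N(s))\,\md s\to\int_0^t\partial_t\EE(s,q(s))\,\md s$ because $\partial_t\EE(s,\cdot)=-\partial_tL(s,\cdot)$ is affine (hence weakly continuous) in $y$ — the spring term differentiated in $t$ is still affine in $y$ — and $\bar q^N(s)\to q(s)$ a.e., with dominated convergence justified by the uniform bounds. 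The reverse inequality is the standard lower-energy estimate deduced from stability: for a partition $0=s_0<\dots<s_m=t$, apply \eqref{stability} at $s_{j-1}$ with competitor $q(s_j)$, write $\EE(s_{j-1},q(s_j))=\EE(s_j,q(s_j))-\int_{s_{j-1}}^{s_j}\partial_t\EE(\sigma,q(s_j))\,\md\sigma$, sum, and let the partition refine, using $s\mapsto\partial_t\EE(s,q(s))\in L^1(0,T)$. Combining the two inequalities gives \eqref{energy-balance}, and with the previous steps this is an energetic solution with $y\in B([0,T];\YY)$ and $z\in BV([0,T];L^1(\Ome;\R^{M+1}))\cap L^\infty(0,T;\ZZ)$. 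The genuinely delicate point is the limit passage keeping $y(t)\in\YY$ with finite interfacial energy, i.e.\ preservation of orientation, of the Ciarlet--Ne\v{c}as constraint and of the structure \eqref{measures1} of the interfacial measures — this is where polyconvexity of the bulk, interface polyconvexity and \cite[Thm.~5]{CiarletNecas-1987} are essential, precisely as in Lemma~\ref{lem-dav}; by contrast the closedness of stable sets, usually the crux of such proofs, is here trivial thanks to the $L^1$-continuity of $\DD$.
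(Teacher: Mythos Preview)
Your proof is correct and follows essentially the same route as the paper: time-incremental minimization via Lemma~\ref{lem-dav}, the two-sided discrete energy inequality, uniform a~priori bounds, generalized Helly selection, and closedness of stable sets via the constant mutual recovery sequence together with the lower semicontinuity argument already used in Lemma~\ref{lem-dav}. The paper's proof is a brief sketch that defers most of these steps to \cite{FrancfortMielke-2006} and \cite[Cor.~2.8]{MielkeTheilLevitas-2002}; you have simply spelled them out in greater detail, including the measurable selection for $y$ and the two halves of the energy balance, without deviating from the intended argument.
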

\begin{proof}
  Let $q^N_k:=(y^k,z^k)$ be the solution of \eqref{incremental} which exists by Lemma \ref{lem-dav} and let
  $q^N:[0,T]\to \QQ$ be given by
  \begin{eqnarray}
    q^N(t):= %
    \begin{cases}
      q^N_k &\mbox{ if $t\in [t_{k},t_{k+1})$ if $k=0,\ldots, N-1$}\ ,\\
      q^N_N &\mbox{ if $t=T$.}
    \end{cases}
  \end{eqnarray} 
  Following \cite{FrancfortMielke-2006}, we get for some $C>0$ and for all $N\in\N$ the estimates
  \begin{subequations}
    \begin{gather}
      \|z^N\|_{BV(0,T; L^1(\O;\R^{M+1}))}\leq C, \qquad
      \|z^N\|_{L^\infty(0,T; BV(\O;\R^{M+1}))}\leq C, \\
      \|y^N\|_{L^\infty(0,T;W^{1,p}(\O;\R^3))}\leq C, 
    \end{gather}
  \end{subequations}
  as well as the following two-sided energy inequality
  \begin{align}
    \int_{t_{k-1}}^{t_k}\partial_t\mathcal{E}(\theta,q_{k}^N)\,{\rm d}\theta %
    &\leq \mathcal{E}(t_k,q^N_k)+\mathcal{D}(z^k,z^{k-1})-\mathcal{E}(t_{k-1},q^N_{k-1}) \NT \\
    &\leq \int_{t_{k-1}}^{t_k}\partial_t\mathcal{E}(\theta,q_{k-1}^N)\,{\rm d}\theta\ . \label{2-sided}
  \end{align}
  The second inequality in \eqref{2-sided} follows since $q_{k}^N$ is a minimizer of \eqref{incremental} and by
  comparison of its energy with $q:=q_{k-1}^N$. The lower estimate is implied by the stability of
  $q^N_{k-1}\in\mathbb{S}(t_{k-1})$ when compared with $\tilde q:=q^N_{k}$.  Having this inequality, the a-priori
  estimates and a generalized Helly's selection principle \cite[Cor.~2.8]{MielkeTheilLevitas-2002} we get that there is
  indeed an energetic solution obtained as a limit for $N\to \infty$. In particular, the fact that $\det\nabla y>0$
  a.e. in $\Omega$ follows from the fact that if $t_j\to t$, $(y_{(j)}, z_{(j)})\in\mathbb{S}(t_j)$ and $(y_{(j)},
  z_{(j)})\wto (y,z)$ in $W^{1,p}(\Ome;\R^3) \times BV(\O;\R^{M+1})$, then $(y,z)\in\mathbb{S}(t)$.  Indeed, in
  particular we have $z_{(j)}\to z$ in $L^1(\O;\R^{M+1})$ and hence for all $(\tilde y,\tilde z)\in\QQ$, we get
  \begin{align*}
    \EE(t,y,z) %
    &\leq \liminf_{j\to \infty}\EE(t_j,y_{(j)}, z_{(j)}) %
    \leq \liminf_{j\to \infty}\EE(t_j,\tilde y, \tilde z) +\liminf_{j\to \infty}\DD(z_{(j)},\tilde z)\\
    &=\EE(t,\tilde y, \tilde z)+\DD(z,\tilde z)\ .
  \end{align*}
  In particular, as $\EE(t_j,\tilde y, \tilde z)$ is finite for some $(\tilde y,\tilde z)\in\QQ$ we get
  $\EE(t,y,z)<+\infty$ and thus $\det\nabla y>0$ a.e. in $\Omega$ in view of \eqref{W-ass1}.
\end{proof}

\begin{remark}
Adding a term of the form $F\mapsto |\cof F|^p/(\det F)^{p-1}$, which is polyconvex, to the bulk stored energy density we can even show injectivity of deformations  everywhere in $\Omega$ for all time instants. See  e.g.~\cite[Rem.~1.2]{BenesovaKruzik-2015}  and also \cite{Ball-1981} where such term  already appeared. 
\end{remark}

{\bf Acknowledgment.} This work was initiated during a visit of MK in the Institute of Applied Mathematics at the
University of Heidelberg and further conducted during a visit of HK in the Institute of Information Theory and
Automation at Prague. The hospitality and support of both institutions is gratefully acknowledged. MK was further
supported by GA\v{C}R through the project 14-15264S.



\end{document}